\newtheorem{lemma}{Lemma}[section]
\newtheorem{theorem}[lemma]{Theorem}
\begin{document}
	
	\title{The hat guessing number of random graphs with constant edge-chosen probability}

\author{Lanchao Wang,  Yaojun Chen\thanks{Corresponding author. 
 Email: 
\href{mailto://yaojunc@nju.edu.cn}{ yaojunc@nju.edu.cn}.
}}
 \affil{ { \small {Department of Mathematics, Nanjing University, Nanjing 210093, China}}
 }

\date{\today}
\maketitle
\begin{abstract}

Let $G$ be a graph with $n$ vertices.
The {\em hat guessing number} of $G$ is defined in terms of the following game:
There are $n$  players and one opponent.
The opponent will wear one of the $q$ hats of different colors on the player's head.
At this time, the player can only see the player's hat color at the adjacent vertex,
and communication between players is not allowed.
Once players are assigned hats, each player must guess the color of his hat at the same time.
If at least one player guesses right, then they will win collectively.
Given a graph $G$, its hat guessing number $HG(G)$ is the largest integer $q$ such that
there exists a guessing strategy guaranteeing at least one correct guess for any hat
assignment of $q$ different colors.

Let $\mathcal{G}(n,p)$ denote the  Erd\H{o}s-R\'{e}nyi random graphs
with $n$ vertices and edge-chosen probability $ p\in(0,1)$.
Alon-Chizewer and Bosek-Dudek-Farnik-Grytczuk-Mazur investigated the
lower and upper bound for $HG(G)$ when $G\in \mathcal{G}(n,1/2)$, respectively.
In this paper, we extends their results by showing that for any constant
number $p$, we have $n^{1 - o(1)}\le HG(G) \le (1-o(1))n$ with high probability when $G\in \mathcal{G}(n,p)$.

\vskip 2mm
\noindent{\bf 2020 Mathematics Subject Classification}: 05C57, 05C80

\noindent{\bf Keywords}: Hat guessing number; Random graph

	\end{abstract}

	\section{Introduction}

\vskip.10cm

The {\em hat guessing number} of a graph $G$ on $n$ vertices was defined by Bosek,  Dudek,  Farnik,  Grytczuk and  Mazur \cite{butler2008} in terms of the
following game:
There are $n$  players and one opponent.
The opponent will wear one of the $q$ hats of different colors on the player's head.
At this time, the player can only see the player's hat color at the adjacent vertex,
and communication between players is not allowed.
Once players are assigned hats, each player must guess the color of his hat at the same time.
If at least one player guesses right, then they will win collectively.
Given a graph $G$, its hat guessing number $HG(G)$ is the largest integer $q$ such that
there exists a guessing strategy guaranteeing at least one correct guess for any hat
assignment of $q$ different colors.
 The value of $HG(G)$ has been studied in several papers
including \cite{alon2020,alon2022,bosek2021,butler2008,
gadou2018,gadou2015,he2022,he2020,kokh2019,lll,szcz2017}.

\vskip.1cm

Let $\mathcal{G}(n,p)$ denote the  Erd\H{o}s-R\'{e}nyi random graphs
with $n$ vertices and edge-chosen probability $ p\in(0,1)$.
A sequence of events $\{ \mathcal{A}_n \}$ is said to occur {\it with
high probability} if $\mathbb{P} (\mathcal{A}_n) \to 1$ as $n\to \infty$.
It is natural to estimate the hat guessing number of the random graphs.
 Bosek, Dudek, Farnik, Grytczuk and Mazur started to study  the hat guessing number of $G\in \mathcal{G}(n,1/2)$ and
proved that $(2-o(1))\log_2 n \leq HG(G) \leq n-(1 + o(1))\log_2 n$ with high probability.
Along their work,  Alon and Chizewer \cite{alon2022} improved the lower bound of $(2-o(1))\log_2 n$ considerably into $n^{1-o(1)}$.

In this paper, we study the hat guessing number of the random graph $G\in \mathcal{G}(n,p)$ with $p$ an arbitrarily constant,
and extend their results as follows.

{ \begin{theorem}\label{1}
Let $G$ be a graph chosen from $\mathcal{G}(n,p)$ and $p$ is a constant not
dependent on $n$. Then $n^{1 - o(1)}\le HG(G) \le (1-o(1))n$ with high probability.
\end{theorem}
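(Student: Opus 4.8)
The plan is to establish the two bounds separately, and the harder direction is clearly the lower bound $HG(G)\ge n^{1-o(1)}$, since the upper bound $HG(G)\le (1-o(1))n$ follows from a degree/counting obstruction that is relatively insensitive to the exact value of $p$.

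\medskip

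\noindent\textbf{Upper bound.} First I would prove $HG(G)\le (1-o(1))n$. The key structural fact is that $HG(G)\le n - \omega$ where $\omega$ is (roughly) the size of an independent set, or more precisely that isolated-like vertices and low-degree vertices limit the guessing power. The cleanest approach is via the classical bound that relates $HG(G)$ to the existence of large sets of vertices whose guesses can be ``wasted.'' Concretely, I would find with high probability a set $S$ of $(1-o(1))n$ vertices that the adversary can neutralize: since in $\mathcal{G}(n,p)$ the independence number is $\alpha(G)=\Theta(\log n)$ and, more usefully, one can locate many vertices of bounded visibility or use the bound $HG(G)\le n - \alpha(G) + 1$ type argument. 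Following Bosek--Dudek--Farnik--Grytczuk--Mazur, who obtained $HG(G)\le n-(1+o(1))\log_2 n$ for $p=1/2$, the same adversarial strategy (fixing colors on an independent set so those players see nothing informative and forcing the rest into a deficient regime) extends verbatim to constant $p$, giving $HG(G)\le n-\alpha(G)=n-\Theta(\log n)=(1-o(1))n$. This step I expect to be routine.

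\medskip

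\noindent\textbf{Lower bound.} The substantive work is $HG(G)\ge n^{1-o(1)}$. The strategy is to exhibit, with high probability, a subgraph structure inside $G$ that already forces a large hat guessing number, and then use monotonicity $HG(H)\le HG(G)$ for $H\subseteq G$ on the vertex set (adding edges only helps the guessers). The natural candidate, following Alon--Chizewer for $p=1/2$, is to embed a graph built from a \emph{book} or \emph{blow-up / layered clique} construction whose hat guessing number is known to be polynomially large. Specifically, Alon--Chizewer show that certain dense bipartite-like gadgets $G_d$ on roughly $d$ vertices satisfy $HG(G_d)\ge d^{1-o(1)}$; the plan is to show that $\mathcal{G}(n,p)$ contains such a gadget on $n^{1-o(1)}$ vertices with high probability. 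The embedding succeeds because for constant $p$ the random graph contains every fixed-density bounded-complexity structure of size up to nearly $n$ with high probability, by standard second-moment / union-bound arguments over the relevant subsets.

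\medskip

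\noindent The main obstacle, and where I would concentrate the effort, is the \emph{embedding / existence argument for constant $p<1/2$ (and $p$ close to $0$)}. For $p=1/2$ the needed gadgets sit inside $\mathcal{G}(n,1/2)$ almost for free, but as $p$ shrinks the target subgraph must be sparser or must be found among a controlled collection of candidate vertex sets, and one must verify that the high-probability existence survives. I would handle this by partitioning the vertices into $\Theta(n/m)$ blocks of size $m=n^{1-o(1)}$ and showing that with high probability at least one block induces (or contains as a subgraph on its vertex set) the required hat-guessing gadget; since the blocks give essentially independent trials, a single block succeeding with probability bounded away from $0$ suffices to win by a Borel--Cantelli-type union over blocks. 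The delicate point is choosing $m$ as large as $n^{1-o(1)}$ while keeping the per-block success probability from decaying too fast, which forces a careful tradeoff in the $o(1)$ terms; balancing this tradeoff so that the final exponent is $1-o(1)$ rather than a constant less than $1$ is the crux of the proof.
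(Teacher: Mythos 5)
Your upper bound is essentially the paper's route and is fine: the paper applies the Bosek--Dudek--Farnik--Grytczuk--Mazur bound $HG(G)\le\left(1-\frac{1}{2\chi(G)}\right)n$ together with the standard estimate $\chi(G)=(1+o(1))\frac{n}{2\log_{1/(1-p)}n}$ for $G\in\mathcal{G}(n,p)$, which yields $HG(G)\le n-(1-o(1))\log_{1/(1-p)}n$. Your phrasing in terms of $\alpha(G)$ rather than $\chi(G)$ gives the same order since $\chi\approx n/\alpha$ here, though you should cite the chromatic-number version, which is the bound actually proved in the literature; I am not aware of a clean $HG(G)\le n-\alpha(G)$ theorem, and ``the adversary fixes colors on an independent set'' is not by itself a valid argument.

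The genuine gap is in the lower bound. You correctly name the ingredients (a book-type gadget, monotonicity of $HG$ under subgraphs), but you misstate the quantitative form of the Alon--Chizewer gadget and consequently propose an embedding scheme that does not work. The relevant fact is $HG(B_{d,m})\ge d^{d-2}$ for $m=d^{d+3}$, where $B_{d,m}$ is $K_d$ plus $m$ independent common neighbors: the hat-guessing power is \emph{exponential} in the spine size $d$, so one takes $d=(1+o(1))\log n/\log\log n$ (well below the clique number $(2-o(1))\log_{1/p}n$ of $\mathcal{G}(n,p)$, so a $K_d$ exists with high probability), and then the $m\approx d^{d+3}\le 0.5np^{d}=n^{1-o(1)}$ pages are simply common neighbors of that clique, whose number concentrates by a single Chernoff bound since $p^{d}=n^{-o(1)}$ for constant $p$. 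This gives $HG(G)\ge d^{d-2}=n^{1-o(1)}$ directly. By contrast, your plan --- find ``a gadget on $n^{1-o(1)}$ vertices with $HG\ge(\text{size})^{1-o(1)}$'' inside one of $\Theta(n/m)$ disjoint blocks of size $m=n^{1-o(1)}$ --- cannot be executed: any gadget whose guessing power is polynomial in its vertex count must either be dense (and no dense structure of polynomial size embeds in $\mathcal{G}(n,p)$; cliques stop at $O(\log n)$) or be exactly the book above, whose $\approx n^{1-o(1)}$ pages do not fit the ``per-block success probability bounded away from $0$, union over blocks'' framework, and for which no block decomposition is needed in the first place. The real balancing act is not your block-size tradeoff but the choice of $d$ so that $d^{d+3}\le 0.5np^{d}$ while $d^{d-2}=n^{1-o(1)}$, which is exactly what $d\sim\log n/\log\log n$ achieves.
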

}

\section{Proof~of~Theorem~\ref{1}}\label{HG1}

\vskip.1cm

Our main idea  comes from \cite{alon2022} and \cite{bosek2021}.
 We begin with some lemmas which are very useful in the proof of our result.

 The first lemma is well known, which gives an asymptotically  estimate of the chromatic
 number $\chi(G)$ and the clique number  $\omega(G)$ of a random graph $G\in \mathcal{G}(n,p)$.

\begin{lemma}\label{l1}\cite{B}
Let $G$ be a graph chosen from $\mathcal{G}(n,p)$. Then
we have $\chi(G) = (1+o(1))\frac{n}{2\log_{1/(1-p)} n}$ and $w(G)=(1-o(1))2\log_{1/p} n $ with high probability.
\end{lemma}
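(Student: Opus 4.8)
The plan is to handle the clique number $\omega(G)$ and the independence number $\alpha(G)$ by the moment method, and then to extract $\chi(G)$ from $\alpha(G)$, with the upper bound on $\chi(G)$ being the genuinely hard part. Write $k_0 = 2\log_{1/p} n$. For the clique number, let $X_k$ denote the number of $k$-cliques in $G$, so that $\mathbb{E}[X_k] = \binom{n}{k} p^{\binom{k}{2}}$. A direct computation shows $\mathbb{E}[X_k] \to 0$ when $k \ge (1+\varepsilon)k_0$ and $\mathbb{E}[X_k] \to \infty$ when $k \le (1-\varepsilon)k_0$. The upper bound $\omega(G) \le (1+\varepsilon)k_0$ then follows from Markov's inequality (the first moment method). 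For the matching lower bound I would apply the second moment method: estimating $\operatorname{Var}(X_k)$ by summing over pairs of $k$-sets according to the size $i$ of their intersection, one checks that the terms with $i \ge 2$ contribute $o(\mathbb{E}[X_k]^2)$ for $k = (1-\varepsilon)k_0$, so Chebyshev's inequality gives $X_k > 0$ with high probability and hence $\omega(G) \ge (1-\varepsilon)k_0$. Letting $\varepsilon \to 0$ yields $\omega(G) = (1+o(1))2\log_{1/p} n$.

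Since an independent set of $G$ is exactly a clique of the complement $\bar G$, and $\bar G \in \mathcal{G}(n, 1-p)$, the same argument applied to $\bar G$ gives $\alpha(G) = (1+o(1))2\log_{1/(1-p)} n$ with high probability. The lower bound on the chromatic number is now immediate: in any proper colouring each colour class is an independent set, so $\chi(G) \ge n/\alpha(G) = (1-o(1)) \frac{n}{2\log_{1/(1-p)} n}$.

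It remains to establish the matching upper bound on $\chi(G)$, which I expect to be the main obstacle. The strategy is to colour greedily by repeatedly pulling out large independent sets, so the crux is to guarantee that \emph{every} sufficiently large vertex subset still contains an independent set of size close to $2\log_{1/(1-p)} n$. Concretely, fix $\varepsilon > 0$, set $m_0 = n/\log^2 n$ and $k = (1-\varepsilon)2\log_{1/(1-p)} n$. For a fixed subset $S$ with $|S| = m \ge m_0$, the probability that $G[S]$ has no independent set of size $k$ must be bounded by a super-exponentially small quantity of the form $\exp(-m^{2-o(1)})$; I would obtain this from Janson's inequality (or equivalently a vertex-exposure martingale combined with a sharp estimate of the mean), exploiting that the expected number of independent $k$-sets in $G[S]$ is large because $k$ is below the typical threshold. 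Since the number of subsets of size at least $m_0$ is at most $2^n = \exp(O(n))$ and $m_0^{2-o(1)} \gg n$, a union bound shows that with high probability every such subset contains an independent set of size $k$. Finally I would colour $G$ iteratively: as long as at least $m_0$ vertices remain, remove an independent set of size $k$ and assign it a fresh colour; this uses at most $n/k = (1+o(1))\frac{n}{2\log_{1/(1-p)} n}$ colours, while the leftover fewer than $m_0 = n/\log^2 n$ vertices are coloured with distinct extra colours, contributing only $o(n/\log n)$, which is negligible against $\chi(G)$. Letting $\varepsilon \to 0$ completes the proof. The delicate point throughout is the super-exponential lower-tail bound for the independence number, since it must beat the $2^n$ cost of the union bound over all large subsets; this is precisely where Janson's inequality, rather than a crude first- or second-moment estimate, is essential.
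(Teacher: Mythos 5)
Your proposal is correct, and it is essentially the standard proof of this lemma, which the paper itself does not prove but quotes from Bollob\'as's \emph{Random Graphs}: first and second moments for $\omega(G)$, complementation to get $\alpha(G)$, the trivial bound $\chi(G)\ge n/\alpha(G)$, and Bollob\'as's theorem for the matching upper bound on $\chi(G)$ via a super-exponential lower-tail bound for the independence number of every subset of size $n/\log^2 n$, a union bound over the at most $2^n$ such subsets, and greedy extraction of independent sets. One small correction to a parenthetical remark: a vertex-exposure martingale applied to $\alpha(G[S])$ itself is $1$-Lipschitz and yields only a bound of the form $\exp(-O(m))$, which cannot beat the $2^n$ union bound, so the martingale alternative to Janson's inequality must be Bollob\'as's edge-exposure martingale applied to the maximum number of pairwise edge-disjoint independent $k$-sets (whose mean is $m^{2-o(1)}$); your primary route via Janson's inequality is the one used in the literature and carries the argument as you describe.
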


 The following lemma, due to  Bosek, Dudek, Farnik, Grytczuk and Mazur \cite{bosek2021}, gives an upper bound for $HG(G)$ in terms of $\chi(G)$.

\begin{lemma}\label{l3}\cite{bosek2021}
 Let $G=(V,E)$ be a graph of order $n$ with chromatic number $\chi(G)\ge 4$. Then,
	\[
	HG(G) \le \left(1- \frac{1}{2\chi(G)}\right) n
	\]
	for sufficiently large $n$.
\end{lemma}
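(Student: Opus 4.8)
Since $HG(G)$ is the largest $q$ for which the players have a winning strategy, and any winning strategy for $q$ colors restricts to a winning strategy for fewer colors, player-winnability is monotone decreasing in $q$. Hence to prove the stated bound it suffices to show that the \emph{opponent} wins once $q$ exceeds it: that is, for every guessing strategy there is a hat assignment on which \emph{no} player guesses correctly, as soon as $q > \left(1-\tfrac{1}{2\chi}\right)n$. My plan is to fix a proper colouring of $G$ into $\chi=\chi(G)$ independent classes, let $I_0$ be a largest class (so $|I_0|\ge n/\chi$), and then \emph{extend} $I_0$ to a maximal independent set $I\supseteq I_0$. This keeps $m:=|I|\ge n/\chi$ while making $I$ \emph{dominating}: every vertex outside $I$ has a neighbour in $I$. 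The point of this reduction is the arithmetic $n-\tfrac{m}{2}\le n-\tfrac{n}{2\chi}=\left(1-\tfrac{1}{2\chi}\right)n<q$, so it is enough to produce an ``all wrong'' assignment whenever $q>n-\tfrac{m}{2}$.

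For the core I would use the probabilistic (alteration) method together with the independence of $I$. Encode a strategy as a family of guess functions $g_v$, where $v$ guesses $g_v(\sigma|_{N(v)})$; call $v$ \emph{correct} under $\sigma$ if $\sigma(v)=g_v(\sigma|_{N(v)})$. Choose $\sigma\in[q]^V$ uniformly at random, so each vertex is correct with probability $1/q$ and the expected number of correct vertices is $n/q$. The two features I would exploit are: (i) the $m$ colours on $I$ are ``free'' fix-up variables, each of which can be changed to any of $q-1$ alternative values, and each change to $\sigma(v)$ for $v\in I$ eliminates $v$'s own correctness; (ii) because $I$ is dominating, every correct vertex $u\notin I$ has a neighbour in $I$ whose recolouring can destroy $u$'s correctness. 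Formally I would assign to each correct vertex a ``fixer'' in $I$ (itself, if it already lies in $I$) and recolour fixers so as to make all guesses wrong. Since the events $\{v\text{ correct}\}$ for $v\in I$ are pairwise uncorrelated (vertices of $I$ are non-adjacent, so the correctness of one is conditionally independent of that of another given the colours outside $I$), the relevant first- and second-moment estimates on the number of vertices that must be fixed are clean, and the fixing succeeds with positive probability precisely in the regime $q>n-\tfrac{m}{2}$.

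The main obstacle is the \emph{coupling} across the cut between $I$ and $V\setminus I$: recolouring a fixer $v\in I$ to repair the vertices charged to it also alters the guesses of \emph{all} of $v$'s other neighbours, so it may create brand-new correct vertices (a cascade) and may even clash with the requirement that $v$ itself stay wrong. Controlling this interaction is the heart of the argument, and it is exactly what prevents one from reaching the naive threshold $q>n-m$; each unit of ``fixing capacity'' in $I$ can be spent only about half as efficiently once collisions are accounted for, which is the source of the factor $2$ (hence $2\chi$ rather than $\chi$). Concretely, I would bound the number of vertices that can be \emph{simultaneously} and safely repaired, show that with positive probability the number of correct vertices stays below the available safe capacity, and absorb the resulting lower-order discrepancies into the hypotheses $\chi\ge 4$ and ``$n$ sufficiently large.'' A second-moment computation on the correct-vertex count, using the vanishing within-$I$ covariances and a crude bound on the cross-cut covariances, is the alternative route I would fall back on if the explicit fixer bookkeeping becomes unwieldy; either way, quantifying the cross-cut correlation is the step I expect to be delicate.
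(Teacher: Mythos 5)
The paper itself does not prove this lemma; it is imported verbatim from \cite{bosek2021}, so the relevant comparison is with the argument there, which is a \emph{global counting} argument over a proper colouring rather than an alteration/repair argument. Fix any strategy and pick $\sigma\in[q]^V$ uniformly at random. If $V_1,\dots,V_\chi$ are the colour classes of a proper $\chi$-colouring, then every neighbour of a vertex $v\in V_i$ lies outside $V_i$; hence, conditioned on the restriction of $\sigma$ to $V\setminus V_i$, the correctness events of the vertices of $V_i$ are \emph{mutually} independent, each of probability exactly $1/q$, so $\mathbb{P}(\text{some vertex of } V_i \text{ guesses correctly})=1-(1-1/q)^{n_i}$ exactly, where $n_i=|V_i|$. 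A union bound over the $\chi$ classes, the Bonferroni estimate $1-(1-1/q)^{n_i}\le \frac{n_i}{q}-\binom{n_i}{2}\frac{1}{q^2}+\binom{n_i}{3}\frac{1}{q^3}$, and convexity $\sum_i n_i^2\ge n^2/\chi$ show that the probability that \emph{any} player is correct is strictly below $1$ as soon as $q>\left(1-\frac{1}{2\chi}\right)n$ (the hypotheses $\chi\ge 4$ and $n$ large absorb the error terms), so some assignment defeats the strategy. In particular, the factor $2$ is simply the $\tfrac12$ in $\binom{n_i}{2}\approx n_i^2/2$, not a ``collision inefficiency'' of repair moves as you conjecture.

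Your proposal has genuine gaps and, as structured, cannot be completed. First, the step you yourself flag --- recolouring a fixer $v\in I$ without creating new correct vertices among $v$'s other neighbours --- is the entire difficulty, and no mechanism is offered for it; cascades are unavoidable against worst-case strategies (on $K_n$ with $q=n$ and the sum-mod-$q$ strategy, every assignment has exactly one correct player and every single-hat change merely transfers correctness to another player, so local repair never terminates), which is why the successful arguments in this area are counting arguments rather than local fixing. Second, your reduction to a \emph{single} maximal independent set $I$ with $m=|I|\ge n/\chi$ targets the threshold $q>n-\frac{m}{2}$, which is strictly stronger than the lemma and is not what the available estimates deliver: using one class, the pairwise/mutual independence gain is only $\binom{m}{2}/q^2$, pushing the threshold to roughly $n-\Theta(m^2/n)=n-\Theta(n/\chi^2)$, an order of magnitude weaker than $n-\frac{n}{2\chi}$; reaching $\frac{n}{2\chi}$ requires using \emph{all} $\chi$ classes together with $\sum_i n_i^2\ge n^2/\chi$, and the domination property of $I$ (maximality) buys nothing in either framework. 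Third, your fallback ``second-moment computation on the correct-vertex count'' points the wrong way: second moments bound $\mathbb{P}(X\ge 1)$ from \emph{below} (or give concentration), whereas here one must beat the first-order union bound from \emph{above}, which is exactly what the grouped conditional-independence structure --- the identity $\mathbb{P}(\text{all of } V_i \text{ wrong})=(1-1/q)^{n_i}$ --- provides and generic covariance bounds do not.
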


{ For two positive integers $d$ and $n$,}
the {\em book graph} $B_{d,n}$ is obtained by adding $n$ nonadjacent common neighbors to the complete graph $K_d$. Note that if  a graph $H$ is a subgraph of $G$, then $HG(H)\le HG(G)$. So the following lemma, due to Alon and Chizewer \cite{alon2022}, will provide a lower bound for $HG(G)$ if we can show that $G$ contains  $B_{d,n}$ as a subgraph.

\begin{lemma}\cite{alon2022}\label{lemma}
For $n = d^{d+3}$, we have $ HG(B_{d,n}) \geq q = d^{d-2}$.
\end{lemma}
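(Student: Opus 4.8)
The plan is to set up a two-layer guessing strategy in which the $n$ page vertices act as a covering mechanism and the clique $K_d$ on the spine $v_1,\dots,v_d$ cleans up whatever the pages miss. Fix a palette $[q]$ with $q=d^{d-2}$. A page $p_k$ sees only the spine colouring $c=(c_1,\dots,c_d)\in[q]^d$, so its strategy is a function $g_k\colon[q]^d\to[q]$ and it guesses $g_k(c)$. A spine vertex $v_i$ sees $c_{-i}=(c_j)_{j\neq i}$ together with the whole vector $a=(a_1,\dots,a_n)$ of page colours. For a page assignment $a$, call a spine colouring $c$ \emph{uncaught} if $g_k(c)\neq a_k$ for every $k$, and write $\mathcal C(a)=\{c:\ g_k(c)\neq a_k\ \forall k\}$. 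The decisive observation is that if $c\notin\mathcal C(a)$ then some page already guesses correctly, so the spine only has to succeed on colourings in $\mathcal C(a)$. Because $v_i$ knows $c_{-i}$ and $a$, the spine can realise any guess pattern $\bigcup_{i=1}^d D_i$ where each $D_i\subseteq[q]^d$ has at most one point on every line in direction $i$ (a partial function in direction $i$): let $v_i$ guess the unique color $x$ with $(c_{-i},x)\in D_i$. Thus the strategy wins provided $\mathcal C(a)$ is \emph{coverable}, i.e. $\mathcal C(a)=\bigcup_{i=1}^d D_i$ for such $D_i$, for every $a$.

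The next step is to recognise coverability as independence in a union of $d$ partition matroids on the ground set $[q]^d$, the $i$-th having the lines in direction $i$ as its blocks. By the Nash--Williams/Edmonds matroid-union theorem, a set $W$ is coverable iff $|T|\le\sum_{i=1}^d\ell_i(T)$ for every $T\subseteq W$, where $\ell_i(T)$ is the number of direction-$i$ lines meeting $T$ (equivalently, the size of the projection of $T$ forgetting coordinate $i$). Combining this with the Loomis--Whitney inequality $|T|^{d-1}\le\prod_{i=1}^d\ell_i(T)$ and AM--GM, any non-coverable set $W$ contains some $T$ with $|T|>\sum_i\ell_i(T)\ge d\,|T|^{(d-1)/d}$, forcing $|T|>d^d$. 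Hence every set of size at most $d^d$ is automatically coverable, and it suffices to choose the page functions so that $|\mathcal C(a)|\le d^d$ for all $a$.

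The heart of the argument --- and the step I expect to be the main obstacle --- is to produce $n=d^{d+3}$ page functions with $\max_a|\mathcal C(a)|\le d^{d}$, and I would do this by the probabilistic method. If some $a$ had $|\mathcal C(a)|\ge d^d+1$, then any $U\subseteq\mathcal C(a)$ with $|U|=d^d+1$ would satisfy $a_k\notin g_k(U)$ for all $k$, i.e. no page maps $U$ onto $[q]$; so it is enough that every $U\in\binom{[q]^d}{d^d+1}$ has some page with $g_k(U)=[q]$. Taking the $g_k$ to be independent uniform random functions, a fixed such $U$ (of size $m=d^d+1$, with $m/q>d^2$) fails to be surjected by a fixed page with probability at most $q(1-1/q)^m\le q\,e^{-d^2}$, so all $n$ pages miss it with probability at most $(q\,e^{-d^2})^n$. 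A union bound over the at most $(q^d)^{m}$ choices of $U$ gives total failure probability at most $\exp\!\big(d(d^d+1)\log q-n(d^2-(d-2)\log d)\big)$; since the subtracted term is of order $d^{d+5}$ while the first term is only of order $d^{d+2}\log d$, this is far below $1$ for $n=d^{d+3}$. Therefore suitable page functions exist, the two-layer strategy wins with $q=d^{d-2}$ colours, and $HG(B_{d,n})\ge d^{d-2}$, as required. The delicate points to get right will be the matroid-union characterisation of coverability and verifying that the crude union bound over $(d^d+1)$-subsets, rather than over all $q^n$ assignments $a$, is what keeps the count manageable.
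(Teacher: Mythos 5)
Your argument is correct, and it is worth noting at the outset that the paper you are working against does not prove this lemma at all: it is quoted as a black box from Alon and Chizewer \cite{alon2022}, so the only meaningful comparison is with the cited source. Your two-layer skeleton (random page functions $g_k:[q]^d\to[q]$, with the spine clique cleaning up the set $\mathcal{C}(a)$ of uncaught colourings) is the same as theirs, but your two supporting steps are your own and both check out. For the clique clean-up you characterize coverable subsets of $[q]^d$ exactly via Edmonds' matroid-union theorem applied to the $d$ partition matroids of lines, and then Loomis--Whitney plus AM--GM correctly yields that any non-coverable set contains a $T$ with $|T|>d\,|T|^{(d-1)/d}$, hence $|T|>d^d$; this is an elegant and, if anything, sharper route than the more hands-on combinatorial bound one usually sees for what $K_d$ can catch. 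For the probabilistic step, your reduction from ``some $a$ has $|\mathcal{C}(a)|>d^d$'' to ``some $(d^d+1)$-set $U$ is surjected by no page'' is exactly what lets you union over the at most $q^{d(d^d+1)}$ sets $U$ instead of over all $q^n$ assignments $a$, and the arithmetic is right: $q(1-1/q)^m\le q e^{-d^2}$ since $m/q>d^2$, and the exponent $d^{d+2}\log d+d^{d+4}\log d-d^{d+5}$ is negative for large $d$. Two points to make explicit in a polished write-up: (i) the decomposition $\mathcal{C}(a)=\bigcup_i D_i$ must be selected by a fixed rule depending only on $a$ (say, lexicographically first), so that all spine vertices, each of whom sees the whole of $a$, agree on which $D_i$ they are implementing; (ii) the final inequality needs $d$ sufficiently large, which is harmless here since the lemma is applied with $d=(1+o(1))\log n/\log\log n\to\infty$.
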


The following lemma is well known, used to bound the sum of independent random variables.

\begin{lemma}\label{l2}(Chernoff bound)
 Suppose that $X_1, X_2,\dots, X_n$ are independent random variables and
 that $a_i\le X_i\le b_i$ for $i=1,2,\dots,n$. Let $X=X_1+X_2+\cdots+X_n$ and $\mu=\mathbb{E}(X)$.
 Denote $c_i=b_i-a_i$, then for any $t>0$, we have
$$\mathbb{P}(X\le \mu-t)\le \exp \left(-\frac{2t^2}{c_1^2+c_2^2+\cdots+c_n^2}\right).$$
\end{lemma}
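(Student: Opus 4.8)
The plan is to apply the exponential Chernoff--Markov method together with Hoeffding's lemma. First I would fix an auxiliary parameter $s>0$ and rewrite the target event: since $\mathbb{P}(X\le \mu-t)=\mathbb{P}\bigl(e^{s(\mu-X)}\ge e^{st}\bigr)$, Markov's inequality gives $\mathbb{P}(X\le\mu-t)\le e^{-st}\,\mathbb{E}\bigl[e^{s(\mu-X)}\bigr]$. Writing $\mu_i=\mathbb{E}(X_i)$ and using that the $X_i$ are independent, the expectation factorizes as $\mathbb{E}\bigl[e^{s(\mu-X)}\bigr]=\prod_{i=1}^{n}\mathbb{E}\bigl[e^{s(\mu_i-X_i)}\bigr]$, so it suffices to control each factor separately.

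The key step is Hoeffding's lemma: for a mean-zero random variable $Y$ with $a'\le Y\le b'$ one has $\mathbb{E}[e^{sY}]\le \exp\bigl(s^2(b'-a')^2/8\bigr)$. I would prove this from the convexity of $x\mapsto e^{sx}$, which yields the pointwise bound $e^{sY}\le \frac{b'-Y}{b'-a'}e^{sa'}+\frac{Y-a'}{b'-a'}e^{sb'}$; taking expectations and using $\mathbb{E}[Y]=0$ reduces the right-hand side to a function $e^{\phi(s)}$ of $s$ alone. A short computation shows $\phi(0)=\phi'(0)=0$ and $\phi''(s)\le (b'-a')^2/4$ for all $s$ (the second derivative has the form $u(1-u)(b'-a')^2$ with $u\in[0,1]$, hence is maximized at $u=1/2$), so Taylor's theorem with Lagrange remainder gives $\phi(s)\le s^2(b'-a')^2/8$. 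Applying this with $Y=\mu_i-X_i$, which has mean zero and ranges over an interval of length $c_i=b_i-a_i$, yields $\mathbb{E}\bigl[e^{s(\mu_i-X_i)}\bigr]\le \exp\bigl(s^2c_i^2/8\bigr)$.

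Combining the factorization with these per-coordinate bounds gives $\mathbb{P}(X\le\mu-t)\le \exp\bigl(-st+\tfrac{s^2}{8}\sum_{i=1}^{n}c_i^2\bigr)$ for every $s>0$. The final step is to optimize the exponent over $s$: the quadratic $-st+\tfrac{s^2}{8}C$ with $C=c_1^2+\cdots+c_n^2$ is minimized at $s=4t/C$, where its value is $-2t^2/C$, producing exactly the claimed bound $\exp\bigl(-2t^2/(c_1^2+\cdots+c_n^2)\bigr)$. I expect the main obstacle to be Hoeffding's lemma, and within it the uniform second-derivative estimate $\phi''\le (b'-a')^2/4$; once that is in hand, the factorization and the Chernoff optimization are routine.
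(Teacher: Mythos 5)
Your proposal is correct and complete. Note that the paper itself offers no proof of this lemma: it is quoted as ``well known'' (it is in fact Hoeffding's inequality, despite the paper's label ``Chernoff bound''), so there is no in-paper argument to compare against. Your derivation is exactly the standard textbook one: Markov's inequality applied to $e^{s(\mu-X)}$, factorization of the moment generating function by independence, Hoeffding's lemma for each mean-zero summand $\mu_i-X_i$ (whose range has length $c_i$), and optimization of the exponent $-st+\tfrac{s^2}{8}\sum_i c_i^2$ at $s=4t/\bigl(c_1^2+\cdots+c_n^2\bigr)$, giving the value $-2t^2/\bigl(c_1^2+\cdots+c_n^2\bigr)$. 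Your sketch of Hoeffding's lemma is also sound: the convexity bound, the reduction to a cumulant-type function $\phi$ with $\phi(0)=\phi'(0)=0$, the uniform estimate $\phi''(s)\le (b'-a')^2/4$ via the $u(1-u)$ form, and Taylor with Lagrange remainder. The only pedantic point worth flagging is the degenerate case $c_i=0$: the convexity step divides by $b'-a'$, but then $X_i$ is almost surely constant and its factor in the product is $1$, so the argument goes through (and the stated bound is vacuous if all $c_i=0$, as the statement implicitly assumes otherwise).
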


\vspace{0.2cm}
 The following asymptotically estimate will be useful in our calculations.
\begin{lemma}\label{l11}
Let $x$ be a real number satisfying $x^x=n$, then we have $x=(1+o(1))\log n/ \log \log n$.
\end{lemma}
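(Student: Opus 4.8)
The plan is to reduce the transcendental relation $x^x = n$ to a tractable asymptotic equation by taking logarithms twice and then inverting. First I would take the logarithm of both sides of $x^x = n$ to obtain $x \log x = \log n$. Since $x^x = n \to \infty$, the quantity $x$ must itself tend to infinity with $n$, and this is the only regime we need to analyze; in particular $\log x \to \infty$ as well.

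Next I would take logarithms a second time. From $x \log x = \log n$ we get $\log x + \log \log x = \log \log n$. The key observation is that $\log \log x = o(\log x)$ as $x \to \infty$, so the left-hand side equals $(1 + o(1)) \log x$. Consequently $\log x = (1 + o(1)) \log \log n$.

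Finally I would feed this estimate back into the first relation. Rewriting $x \log x = \log n$ as $x = \log n / \log x$ and substituting $\log x = (1 + o(1)) \log \log n$ gives $x = (1 + o(1)) \log n / \log \log n$, which is exactly the claimed asymptotic. (One can check that the statement is insensitive to the base of the logarithm, so it suffices to carry out the argument with any single fixed base.)

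The computation is almost entirely routine, and I do not expect a genuine obstacle. The one step that warrants a moment of care is the second one: I must justify that the $\log \log x$ term is truly negligible relative to $\log x$, so as to conclude that $\log x$ is \emph{asymptotic} to $\log \log n$ rather than merely trapped between two constant multiples of it. Once that is in hand, the back-substitution in the final step is immediate and propagates the $(1 + o(1))$ factor cleanly.
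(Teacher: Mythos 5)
Your proof is correct and follows exactly the same route as the paper: take logarithms twice to get $\log x + \log\log x = \log\log n$, deduce $\log x = (1+o(1))\log\log n$, and substitute back into $x\log x = \log n$. The extra care you take in justifying $x\to\infty$ and $\log\log x = o(\log x)$ is a welcome refinement but not a different argument.
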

\begin{proof}
Take the logarithm of both sides twice, we have $\log x+ \log \log x=\log \log n$ and
thus $\log x =(1+o(1)) \log \log n$. Note that $x\log x=\log n$, so $x=(1+o(1))\log n/ \log \log n$, as desired.
\end{proof}

Now, we are ready to begin the main proof.

\begin{proof}[\bfseries{Proof~of~Theorem~\ref{1}}]
For the upper bound, by combining Lemmas \ref{l1} and \ref{l3}, we have
	\begin{align*}
	HG(G) &\le \left(1- \frac{1}{2\chi(G)}\right) n \\
&\le \left(1- \frac{\log_{1/(1-p)} n}{(1+o(1))n}\right) n \\
&= n-(1-o(1))\log_{1/(1-p)} n=(1-o(1))n
	\end{align*}
 with high probability, as desired.

For the lower bound, by Lemmas \ref{l2} and \ref{l11}, with high
probability there exists a clique $K$ of order $d$ in $G$, where $d$ is
an integer satisfying $d^d \cdot d^3 \leq 0.5 np^{d}$ and $d = (1+o(1))\log n/\log \log n $.
Let $N$ denote the number of common neighbors of $K$, then
 $\mathbb{E}(N)=(n-d)p^d$. Applying Lemma \ref{l2} with $t=(0.5 n-d)p^d$, we have
	\begin{align*}
\mathbb{P}(N> 0.5 n p^{d})&\ge 1-\exp \left( -\frac{2 (0.5n-d)^2 p^{2d}}{n} \right)\\&
\ge 1- \exp\left( -0.4 n p^{2\log n/ \log\log n} \right)
\\
&=1-o(1).
	\end{align*}
 Thus $V(K)$ has more than $0.5 np^{d} \geq d^d \cdot d^3$
common neighbors with high probability.
This means that the book graph $B_{d,m}$ with $m = d^d \cdot d^3$ is a subgraph of $G$ with high probability.
So, by Lemmas \ref{lemma} and \ref{l11}, we have $HG(G)\ge d^{d-2}=n^{1-o(1)}$.
This completes the proof of Theorem \ref{1}.
\end{proof}

\vskip.1cm

\section*{Acknowledgments}
 
 %We are grateful to the anonymous referees for their very careful comments.
 
 This research was supported by NSFC under grant numbers  12161141003 and 11931006.   We would like
to thank Kaiyang Lan for many  helps.

\section*{Declaration}
\noindent$\textbf{Conflict~of~interest}$
The authors declare that they have no known competing financial interests or personal
relationships that could have appeared to influence the work reported in this paper.

\section*{Date availability}

No data was used for the research described in the article.

\end{document}